\newcommand{\K}{\mathbb{K}}
\newtheorem{theorem}{Theorem}[section]
\newtheorem{lemma}[theorem]{Lemma}
\newtheorem{proposition}[theorem]{Proposition}
\newtheorem{definition}[theorem]{Definition}
\newtheorem{example}[theorem]{Example}
\newtheorem{remark}[theorem]{Remark}
\begin{document}

\title[Hom-alternative algebras and Hom-Jordan algebras]
{Hom-alternative algebras and Hom-Jordan algebras}

\author{Abdenacer Makhlouf}
\address{Universit\'e de Haute-Alsace, Laboratoire de
Math\'ematiques, Informatique et Applications, 4 rue des Fr\`{e}res
Lumi\`{e}re, 68093 Mulhouse, France  } \email{Abdenacer.Makhlouf@uha.fr}

\begin{abstract}
The purpose of this paper is to  introduce Hom-alternative algebras
and Hom-Jordan algebras. We discuss some of their properties and provide
construction procedures using ordinary alternative algebras or Jordan algebras.
Also, we show that a polarization of Hom-associative algebra leads to Hom-Jordan algebra.
\end{abstract}

\subjclass[2000]{17D05,17C10,17A30} \keywords{Alternative algebra,
Hom-Alternative algebra, Jordan algebra, Hom-Jordan algebra}
\date{}
\dedicatory{Dedicated to Amine Kaidi on his 60th birthday}

\maketitle

\section*{Introduction}
Hom-algebraic structures are algebras where the identities defining the structure are twisted
by a homomorphism. They have been intensively investigated in the literature recently.
The Hom-Lie algebras  were introduced and discussed   in
\cite{HLS,LS1,LS2,LS3}, motivated by quasi-deformations of Lie algebras of vector fields, in particular $q$-deformations of Witt and Virasoro
algebras. Hom-associative algebras
were
introduced in \cite{MS},  where it is shown that
 the commutator bracket of a Hom-associative algebra
gives rise to a Hom-Lie algebra and where a classification of Hom-Lie admissible
algebras is established.
 Given a Hom-Lie algebra, there is a universal enveloping Hom-associative
 algebra (see \cite{Yau:EnvLieAlg}).
Dualizing  Hom-associative algebras, one can define Hom-coassociative
coalgebras, Hom-bialgebras and Hom-Hopf algebras which were introduced in
( \cite{HomAlgHomCoalg,HomHopf}), see also \cite{Canepl2009,Yau:YangBaxter,
Yau:YangBaxter2,Yau:ClassicYangBaxter,Yau:HomQuantumGrp1,Yau:HomQuantumGrp2}. It is
shown in \cite{Yau:comodule} that the universal enveloping Hom-associative
algebra carries a structure of
Hom-bialgebra. See also \cite{AmmarMakhlouf2009,AMS2009,
FregierGohr1,FregierGohr2,Gohr,HomDeform,Yau:homology}
for other works on twisted algebraic structures.

The purpose of this paper is to  introduce Hom-alternative algebras
and Hom-Jordan algebras which are twisted version of the ordinary alternative
algebras and Jordan algebras. We discuss some of their properties and provide
construction procedures using ordinary alternative algebras or Jordan algebras.
Also, we show that a polarization of Hom-associative algebra leads to Hom-Jordan algebra.

In the first Section of this paper we introduce Hom-alternative algebras and
study their properties. In particular, we define a twisted version of the associator and show that it is an alternating function of its arguments. The second Section is devoted to construction of
Hom-alternative algebras. We show that an ordinary alternative algebra and one
of its algebra endomorphisms lead to a Hom-alternative algebra where the twisting
map is actually the algebra endomorphism. This process was
  introduced in \cite{Yau:homology}
 for Lie and associative algebras and more generally to $G$-associative algebras
 (see \cite{MS} for this class of algebras) and generalized to coalgebras in \cite{HomAlgHomCoalg},
 \cite{HomHopf} and to $n$-ary algebras of Lie and associative types in
 \cite{AMS2009}. We derive examples of Hom-alternative algebras from
  4-dimensional alternative algebras which are
 not associative and from algebra of octonions. The last Section is dedicated
 to Jordan algebras. We introduce a notion of Hom-Jordan algebras and show that
 it fits with the Hom-associative structure, that is a Hom-associative algebra
 leads to Hom-Jordan algebra by polarization. Also, we provide a way to construct a Hom-Jordan
 algebra starting from an ordinary Jordan algebra and an algebra endomorphism.

\section{Definitions and properties}
Throughout this paper $\mathbb{K}$ is a field of characteristic 0
and $V$ be a $\K$-linear space.

First, we recall the notion of Hom-associative algebra introduced in
\cite{MS} and provide an example.
\begin{definition}
A \textbf{\emph{Hom-associative algebra}} over $V$ is a triple $( V, \mu,
\alpha) $ where  $\mu : V\times V \rightarrow V$ is a bilinear map
and $\alpha:V \rightarrow V$ is a linear map, satisfying
\begin{equation}\label{Hom-ass}
\mu(\alpha(x), \mu (y, z))= \mu (\mu (x, y), \alpha (z)).
\end{equation}
\end{definition}
\begin{example}\label{example1ass}
Let $\{e_1,e_2,e_3\}$  be a basis of a $3$-dimensional linear space
$V$ over $\K$. The following multiplication $\mu$ and linear map
$\alpha$ on $V$ define Hom-associative algebras over $\K^3${\rm :}
$$
\begin{array}{ll}
\begin{array}{lll}
 \mu ( e_1,e_1)&=& a\ e_1, \ \\
\mu ( e_1,e_2)&=&\mu ( e_2,e_1)=a\ e_2,\\
\mu ( e_1,e_3)&=&\mu ( e_3,x_1)=b\ e_3,\\
 \end{array}
 & \quad
 \begin{array}{lll}
\mu ( e_2,e_2)&=& a\ e_2, \ \\
\mu ( e_2, e_3)&=& b\ e_3, \ \\
\mu ( e_3,e_2)&=& \mu ( e_3,e_3)=0,
  \end{array}
\end{array}
$$

$$  \alpha (e_1)= a\ e_1, \quad
 \alpha (e_2) =a\ e_2 , \quad
   \alpha (e_3)=b\ e_3,
$$
where $a,b$ are parameters in $\K$. The algebras are not associative
when $a\neq b$ and $b\neq 0$, since
$$\mu (\mu (e_1,e_1),e_3))- \mu ( e_1,\mu
(e_1,e_3))=(a-b)b e_3.$$

\end{example}
Now, we introduce the notion of Hom-alternative algebra.
\begin{definition}
A \textbf{\emph{left Hom-alternative}} algebra (resp. \textbf{right
Hom-alternative algebra}) is a triple $(V,\mu,\alpha )$
consisting of a $\K$-linear space $V$, a linear map $\alpha: V
\rightarrow V$ and a multiplication $\mu: V\otimes V \rightarrow V$
satisfying the left Hom-alternative identity, that is
\begin{equation}\label{HomLeftAlternative}
\mu (\alpha (x),\mu(x, y))=\mu(\mu(x, x) , \alpha (y)),
\end{equation}
respectively, right Hom-alternative identity, that is
\begin{equation}\label{HomRightAlternative}
\mu (\alpha(x),\mu(y, y))=\mu(\mu(x,y) ,\alpha( y)),
\end{equation}
for any $x,y$ in $V$.

A Hom-alternative algebra is one which is both left and right
Hom-alternative algebra.
\end{definition}

\begin{remark}
Any Hom associative algebra is a Hom-alternative algebra.
\end{remark}
\begin{definition}Let $\left( V,\mu ,\alpha \right) $ and $\left( V^{\prime },\mu
^{\prime },\alpha^{\prime }\right) $ be two Hom-alternative
algebras. A linear map $f\ :V\rightarrow V^{\prime }$ is said to be
a
\emph{morphism of Hom-alternative algebras} if%
$$  \mu ^{\prime }\circ (f\otimes f)=f\circ \mu
\text{,} \qquad f\circ \alpha=\alpha^{\prime }\circ f .
$$
\end{definition}

Let $\mathfrak{as_\alpha}$  denotes the
\textbf{\emph{Hom-associator}} associated to a Hom-algebra
$(V,\mu,\alpha)$ where $V$ is the linear space, $\mu$ the
multiplication and $\alpha$ the twisting map. The Hom-associator is
a trilinear map defined for any $x,y,z\in V$ by
\begin{equation}\label{HomAssociator}
\mathfrak{as}_\alpha(x,y,z)=\mu (\alpha(x),\mu (y, z))-\mu(\mu(x,
y), \alpha(z)).
\end{equation}
The condition \ref{HomLeftAlternative} (resp.
\ref{HomRightAlternative}) may be written using Hom-associator
respectively
\begin{eqnarray*}
 \mathfrak{as}_\alpha(x,x,y)=0 ,\quad \mathfrak{as}_\alpha(y,x,x)=0.
 \end{eqnarray*}

By linearization, we have the following equivalent definition of
left and right Hom-alternative algebras.

\begin{proposition}
A triple  $(V,\mu,\alpha)$ is a left Hom-alternative algebra (resp.
right alternative algebra) if and only if the identity
\begin{equation}\label{HomLeftAlternativeLineariz}
\mu (\alpha (x),\mu(y, z))-\mu(\mu(x, y),\alpha( z))+\mu (\alpha
(y), \mu(x, z))-\mu(\mu(y, x), \alpha (z))=0.
\end{equation}
respectively,
\begin{equation}\label{HomRightAlternativeLineariz}
\mu (\alpha(x),\mu (y, z))-\mu (\mu (x, y), \alpha(z))+\mu
(\alpha(x), \mu (z, y))-\mu (\mu (x, z), \alpha(y))=0.
\end{equation}
holds.
\end{proposition}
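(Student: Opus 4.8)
The plan is to recognize this proposition as a standard polarization (linearization) argument, whose only structural input is that the Hom-associator is trilinear. Indeed, since $\mu$ is bilinear and $\alpha$ is linear, the map $(x,y,z)\mapsto \mathfrak{as}_\alpha(x,y,z)=\mu(\alpha(x),\mu(y,z))-\mu(\mu(x,y),\alpha(z))$ is linear in each of its three arguments separately. I would first rewrite both identities in associator form: the left Hom-alternative identity \eqref{HomLeftAlternative} reads $\mathfrak{as}_\alpha(x,x,z)=0$, while the claimed linearized identity \eqref{HomLeftAlternativeLineariz} is exactly $\mathfrak{as}_\alpha(x,y,z)+\mathfrak{as}_\alpha(y,x,z)=0$. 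Establishing the equivalence then amounts to a symmetry statement in the first two slots of $\mathfrak{as}_\alpha$.

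For the forward direction, I would assume $\mathfrak{as}_\alpha(x,x,z)=0$ for all $x,z$ and substitute $x\mapsto x+y$. Expanding by trilinearity gives
\[
\mathfrak{as}_\alpha(x,x,z)+\mathfrak{as}_\alpha(x,y,z)+\mathfrak{as}_\alpha(y,x,z)+\mathfrak{as}_\alpha(y,y,z)=0,
\]
and the first and last terms vanish by hypothesis, leaving precisely \eqref{HomLeftAlternativeLineariz}. For the converse, I would set $y=x$ in \eqref{HomLeftAlternativeLineariz}, obtaining $2\,\mathfrak{as}_\alpha(x,x,z)=0$; this is the single point at which the standing hypothesis $\operatorname{char}\K=0$ enters, permitting division by $2$ to recover $\mathfrak{as}_\alpha(x,x,z)=0$.

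The right Hom-alternative case is entirely analogous, with the last two arguments of $\mathfrak{as}_\alpha$ playing the role of the first two: one writes \eqref{HomRightAlternative} as $\mathfrak{as}_\alpha(x,y,y)=0$ and \eqref{HomRightAlternativeLineariz} as $\mathfrak{as}_\alpha(x,y,z)+\mathfrak{as}_\alpha(x,z,y)=0$, then polarizes via $y\mapsto y+z$ and specializes via $z=y$. I do not expect a genuine obstacle; the argument is routine once trilinearity is in hand. The only points demanding care are verifying that $\mathfrak{as}_\alpha$ is linear in each slot (which is where the bilinearity of $\mu$ and the linearity of $\alpha$ are used) and tracking the factor $2$ so that the characteristic-zero assumption is invoked exactly once, in the converse direction.
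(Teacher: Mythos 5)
Your proposal is correct and follows essentially the same route as the paper: polarize the identity $\mathfrak{as}_\alpha(x,x,z)=0$ via $x\mapsto x+y$ (resp.\ $\mathfrak{as}_\alpha(x,y+z,y+z)=0$) for the forward direction, and specialize $y=x$ (resp.\ $z=y$) for the converse. Your explicit tracking of the trilinearity of $\mathfrak{as}_\alpha$ and of the factor $2$ in the converse --- which is where $\operatorname{char}\K=0$ (indeed only $\operatorname{char}\K\neq 2$) is needed --- merely spells out what the paper leaves implicit.
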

\begin{proof}
We assume that,  for any $x,y,z\in V$,
$\mathfrak{as}_\alpha(x,x,z)=0 $ (left alternativity), then we
expand $\mathfrak{as}_\alpha(x+y,x+y,z)=0 $.

The proof for right Hom-alternativity is obtained by expanding
$\mathfrak{as}_\alpha(x,y+z,y+z)=0 .$

Conversely, we set $x=y$ in (\ref{HomLeftAlternativeLineariz}), respectively $y=z$ in (\ref{HomRightAlternativeLineariz}).
\end{proof}

  \begin{remark} The multiplication could be considered as a linear map
  $\mu : V \otimes V \rightarrow V$, then the condition
  (\ref{HomLeftAlternativeLineariz}) (resp. (\ref{HomRightAlternativeLineariz})) writes
  \begin{equation}\label{HomLeftAlternativeLineariz2}
\mu \circ (\alpha\otimes \mu-\mu \otimes \alpha)\circ (id^{\otimes
3} +\sigma_{1})=0,
\end{equation}
respectively
\begin{equation}\label{HomRightAlternativeLineariz2}
\mu \circ (\alpha\otimes \mu-\mu \otimes \alpha)\circ (id^{\otimes
3}+\sigma_{2})=0.
\end{equation}
where $id$ stands for the identity map and $\sigma_{1}$ and
$\sigma_{2}$ stands for trilinear maps defined for any $x_1,x_2, x_3\in V$ by
\begin{eqnarray*}
\sigma_{1}(x_1\otimes x_2\otimes x_3)=x_2\otimes x_1\otimes x_3,\\
\sigma_{2}(x_1\otimes x_2\otimes x_3)=x_1\otimes x_3\otimes
x_2.
\end{eqnarray*} 

In terms of associators, the identities
(\ref{HomLeftAlternativeLineariz}) (resp.
(\ref{HomRightAlternativeLineariz})) are equivalent respectively to 
\begin{equation}\label{Alt0}\mathfrak{as}_\alpha+\mathfrak{as}_\alpha\circ\sigma_{1}=0 \quad
\text{ and  }\
\mathfrak{as}_\alpha+\mathfrak{as}_\alpha\circ\sigma_{2}=0.
\end{equation}
  \end{remark}
  Hence, for any $x,y,z\in V$, we have
  \begin{equation}\label{Alt1}
  \mathfrak{as}_\alpha(x,y,z)=-\mathfrak{as}_\alpha(y,x,z)\quad \text{ and
  }\quad \mathfrak{as}_\alpha(x,y,z)=-\mathfrak{as}_\alpha(x,z,y).
  \end{equation}
  We have also the following property.
\begin{lemma}\label{HomAlternating1}
Let $(V,\mu,\alpha)$ be an alternative Hom-algebra. Then
\begin{equation}\label{Alt1}
\mathfrak{as}_\alpha(x,y,z)=-\mathfrak{as}_\alpha(z,y,x).
\end{equation}
\end{lemma}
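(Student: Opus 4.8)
The plan is to derive the claim purely formally from the two adjacent-transposition antisymmetries recorded just above the lemma, namely $\mathfrak{as}_\alpha(x,y,z)=-\mathfrak{as}_\alpha(y,x,z)$ and $\mathfrak{as}_\alpha(x,y,z)=-\mathfrak{as}_\alpha(x,z,y)$. Since the permutation sending $(x,y,z)$ to $(z,y,x)$ is the transposition of the two outer slots, and any such transposition of $\{1,2,3\}$ factors into an odd number of adjacent transpositions, each of which flips the sign, I expect the net effect to be a single sign change. Concretely, I would chain the two basic identities three times.

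First I would swap the first two arguments to get $\mathfrak{as}_\alpha(x,y,z)=-\mathfrak{as}_\alpha(y,x,z)$. Next I would swap the last two arguments on the right-hand side, which turns this into $\mathfrak{as}_\alpha(x,y,z)=\mathfrak{as}_\alpha(y,z,x)$, the two minus signs cancelling. Finally, swapping the first two arguments once more gives $\mathfrak{as}_\alpha(y,z,x)=-\mathfrak{as}_\alpha(z,y,x)$, whence
$$\mathfrak{as}_\alpha(x,y,z)=-\mathfrak{as}_\alpha(z,y,x),$$
as claimed.

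There is no genuine obstacle here: the statement is a purely combinatorial consequence of the alternating behaviour of the associator established in the preceding remark, so the only thing to watch is the bookkeeping of the three signs. If one wished to avoid citing those antisymmetries, an alternative but strictly more laborious route would be to expand both sides directly from the definition \eqref{HomAssociator} and eliminate the unwanted terms using the linearized identities \eqref{HomLeftAlternativeLineariz} and \eqref{HomRightAlternativeLineariz}; this yields no extra insight, so I would favour the three-swap argument.
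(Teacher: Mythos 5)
Your proof is correct and is essentially the paper's own argument: both derive the outer-slot antisymmetry purely from the two adjacent-swap identities in \eqref{Alt0}, via three sign flips (the paper organizes them as showing $\mathfrak{as}_\alpha(x,y,z)+\mathfrak{as}_\alpha(z,y,x)=0$ by applying the first-slot swap to each summand and then cancelling with the second-slot swap, while you chain the same three swaps as equalities). The bookkeeping in your chain is accurate, so nothing further is needed.
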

\begin{proof}
Using (\ref{Alt0}), we have
\begin{eqnarray*}
\mathfrak{as}_\alpha(x,y,z)+\mathfrak{as}_\alpha(z,y,x)&=&
-\mathfrak{as}_\alpha(y,x,z)-\mathfrak{as}_\alpha(y,z,x)\\
\ &=&0.
\end{eqnarray*}
\end{proof}
\begin{remark}
The identities (\ref{Alt0},\ref{Alt1}) lead to the fact that an
algebra is Hom-alternative if and only if the Hom-associator
$\mathfrak{as}_\alpha(x,y,z)$ is an alternating function of its
arguments, that is
$$\mathfrak{as}_\alpha(x,y,z)=-\mathfrak{as}_\alpha(y,x,z)=
-\mathfrak{as}_\alpha(x,z,y)=-\mathfrak{as}_\alpha(z,y,x).$$
\end{remark}
\begin{proposition}
A Hom-alternative algebra is Hom-flexible, that is
$\mathfrak{as}_\alpha(x,y,x)=0$.
\end{proposition}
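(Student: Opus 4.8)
The plan is to deduce Hom-flexibility directly from the full skew-symmetry of the Hom-associator already established, rather than expanding the defining identities afresh. Since we know from Lemma~\ref{HomAlternating1} (and the accompanying remark) that $\mathfrak{as}_\alpha$ is an alternating function of its three arguments, the vanishing $\mathfrak{as}_\alpha(x,y,x)=0$ should follow by the standard principle that an alternating map vanishes whenever two of its arguments coincide. The only decision is which antisymmetry relation to specialize.

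The cleanest route, which I would take first, invokes the relation $\mathfrak{as}_\alpha(x,y,z)=-\mathfrak{as}_\alpha(z,y,x)$ of Lemma~\ref{HomAlternating1}, expressing antisymmetry in the first and third slots. Setting $z=x$ yields
$$\mathfrak{as}_\alpha(x,y,x)=-\mathfrak{as}_\alpha(x,y,x),$$
so that $2\,\mathfrak{as}_\alpha(x,y,x)=0$ for all $x,y\in V$. Because $\K$ has characteristic $0$ by our standing hypothesis (in particular $2\neq 0$), we may cancel the factor $2$ and conclude $\mathfrak{as}_\alpha(x,y,x)=0$, as required.

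There is no real obstacle here: the entire content has been absorbed into the skew-symmetry relations, so the argument is immediate once the right relation is chosen. If one prefers to avoid dividing by $2$, I would instead combine the first relation of (\ref{Alt1}), namely $\mathfrak{as}_\alpha(x,y,z)=-\mathfrak{as}_\alpha(y,x,z)$, with right Hom-alternativity. Specializing $z=x$ gives $\mathfrak{as}_\alpha(x,y,x)=-\mathfrak{as}_\alpha(y,x,x)$, and the right Hom-alternative identity $\mathfrak{as}_\alpha(y,x,x)=0$ makes the right-hand side vanish, again forcing $\mathfrak{as}_\alpha(x,y,x)=0$ without any use of invertibility of $2$.
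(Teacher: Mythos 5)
Your proof is correct and follows exactly the paper's own argument: specializing the antisymmetry relation $\mathfrak{as}_\alpha(x,y,z)=-\mathfrak{as}_\alpha(z,y,x)$ of Lemma~\ref{HomAlternating1} at $z=x$ and cancelling the factor $2$, which is legitimate since $\K$ has characteristic $0$. Your alternative route via $\mathfrak{as}_\alpha(x,y,x)=-\mathfrak{as}_\alpha(y,x,x)=0$ is a nice characteristic-free variant, but the main argument is the same as the paper's.
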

\begin{proof}
Using  lemma \ref{HomAlternating1}, we have
$\mathfrak{as}_\alpha(x,y,x)=-\mathfrak{as}_\alpha(x,y,x)$.
Therefore, $\mathfrak{as}_\alpha(x,y,x)=0$.
\end{proof}

\begin{proposition}
Let $(V,\mu,\alpha)$ be a Hom-alternative algebra and $x,y,z\in V$.

If $x$ and $y$ anticommute, that is $\mu (x,y)=-\mu (y,x)$, then we
have
\begin{equation}
\mu (\alpha (x),\mu (y,z))=-\mu (\alpha (y),\mu (x,z),
\end{equation}
and
\begin{equation}
\mu (\mu (z,x),\alpha (y))=-\mu (\mu (z,y),\alpha (x)).
\end{equation}
\end{proposition}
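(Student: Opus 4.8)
The plan is to read both identities directly off the antisymmetry relations of the Hom-associator recorded in (\ref{Alt1}), combining them with the anticommutativity hypothesis and the bilinearity of $\mu$.

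For the first identity I would start from the relation $\mathfrak{as}_\alpha(x,y,z)=-\mathfrak{as}_\alpha(y,x,z)$. Expanding both associators via the definition (\ref{HomAssociator}) and transposing terms, the two contributions carrying $\alpha(z)$ in the last slot collect, by bilinearity of $\mu$ in its first argument, into $\mu(\mu(x,y)+\mu(y,x),\alpha(z))$. By the anticommutativity hypothesis $\mu(x,y)=-\mu(y,x)$ this summand vanishes, and what survives is exactly $\mu(\alpha(x),\mu(y,z))+\mu(\alpha(y),\mu(x,z))=0$, which is the asserted equality.

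For the second identity I would apply the same antisymmetry, but to the permuted triple $(z,x,y)$, namely $\mathfrak{as}_\alpha(z,x,y)=-\mathfrak{as}_\alpha(z,y,x)$. This time the terms of the shape $\mu(\alpha(z),-)$ combine, by bilinearity of $\mu$ in its second argument, into $\mu(\alpha(z),\mu(x,y)+\mu(y,x))$, which again vanishes by anticommutativity. The remaining terms then give $\mu(\mu(z,x),\alpha(y))+\mu(\mu(z,y),\alpha(x))=0$, as required.

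There is no genuinely hard step here: the whole content lies in choosing, for each identity, the permutation of the associator that places the anticommuting pair $x,y$ inside a single application of $\mu$ — in the first slot for the first identity and in the second slot for the second — so that bilinearity turns $\mu(x,y)+\mu(y,x)$ into zero. The only point to watch is the sign bookkeeping when transposing the expanded associators.
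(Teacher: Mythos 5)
Your proof is correct and follows essentially the same route as the paper: the paper derives the first identity from the linearized left Hom-alternative identity (the antisymmetry $\mathfrak{as}_\alpha(x,y,z)=-\mathfrak{as}_\alpha(y,x,z)$ applied to $(x,y,z)$) and the second from the linearized right identity applied to $(z,x,y)$, in each case cancelling the pair of terms containing $\mu(x,y)+\mu(y,x)$ by anticommutativity. Your sign bookkeeping and choice of permutations agree with the paper's argument.
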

\begin{proof}
The left alternativity leads to
\begin{equation}
\mu (\alpha (x),\mu(y, z))-\mu(\mu(x, y),\alpha( z))+\mu (\alpha
(y), \mu(x, z))-\mu(\mu(y, x), \alpha (z))=0.
\end{equation}
Since $\mu(x, y)=-\mu(y, x)$, then the previous identity becomes
\begin{equation}
\mu (\alpha (x),\mu(y, z))+\mu (\alpha (y), \mu(x, z))=0.
\end{equation}
Similarly, using the right alternativity and the assumption of
anticommutativity, we get the second identity.
\end{proof}

  \begin{remark}
  A subalgebra of an alternative algebra $(V,\mu,\alpha)$ is given by a
   subspace
  $W$ of $V$ such that for any $x,y\in W$, we have  $\mu(x,y)\in W$ and
  $\alpha (x)\in W$. The multiplication and the twisting map being the same.
  The notions of ideal, quotient algebra are defined as usual and similarly.

\end{remark}

\section{Construction theorem and Examples}
In this section, we provide a way to construct Hom-alternative
algebras starting from an alternative algebra and an algebra endomorphism. This procedure was applied to associative
algebras, $G$-associative algebras and Lie algebra in
\cite{Yau:homology}. It was  extended to coalgebras in
\cite{HomAlgHomCoalg} and to $n$-ary algebras of Lie type
respectively associative type in \cite{AMS2009}.

\begin{theorem}\label{thmConstrHomAlt}
Let $(V,\mu)$ be a left  alternative algebra (resp. a right
alternative algebra ) and  $\alpha : V\rightarrow V$ be an
 algebra endomorphism. Then $(V,\mu_\alpha,\alpha)$,
where $\mu_\alpha=\alpha\circ\mu$,  is a left  Hom-alternative
algebra (resp. right Hom-alternative algebra).

Moreover, suppose that  $(V',\mu')$ is another left  alternative algebra (resp. a right
alternative algebra )  and $\alpha ' : V'\rightarrow V'$ is an algebra
endomorphism. If $f:V\rightarrow V'$ is an algebras morphism that
satisfies $f\circ\alpha=\alpha'\circ f$ then
$$f:(V,\mu_\alpha,\alpha)\longrightarrow (V',\mu'_{\alpha '},\alpha ')
$$
is a morphism of left  Hom-alternative algebras (resp. right Hom-alternative algebras).
\end{theorem}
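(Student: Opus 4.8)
The plan is to verify directly the Hom-left-alternative identity \eqref{HomLeftAlternative} for the twisted triple $(V,\mu_\alpha,\alpha)$, exploiting the single structural fact that $\alpha$ is an algebra endomorphism, hence $\mu(\alpha(a),\alpha(b))=\alpha(\mu(a,b))$ for all $a,b$. The whole point is that each occurrence of $\mu_\alpha=\alpha\circ\mu$ introduces one extra copy of $\alpha$, and that copy can be pushed outward through $\mu$ using multiplicativity, so that both sides of the Hom-identity collapse to $\alpha^2$ applied to the two sides of the \emph{untwisted} left-alternative identity.

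Concretely, for the left-hand side I would compute
$$\mu_\alpha(\alpha(x),\mu_\alpha(x,y))=\alpha\big(\mu(\alpha(x),\alpha(\mu(x,y)))\big)=\alpha^2\big(\mu(x,\mu(x,y))\big),$$
where the last equality applies $\mu(\alpha(a),\alpha(b))=\alpha(\mu(a,b))$ with $a=x$ and $b=\mu(x,y)$. Symmetrically, for the right-hand side,
$$\mu_\alpha(\mu_\alpha(x,x),\alpha(y))=\alpha\big(\mu(\alpha(\mu(x,x)),\alpha(y))\big)=\alpha^2\big(\mu(\mu(x,x),y)\big).$$
Since $(V,\mu)$ is left alternative we have $\mu(x,\mu(x,y))=\mu(\mu(x,x),y)$, and applying the linear map $\alpha^2$ to both sides yields equality of the two displays above, which is exactly \eqref{HomLeftAlternative} for $(V,\mu_\alpha,\alpha)$. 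The right-alternative case \eqref{HomRightAlternative} is entirely analogous, replacing the pattern $(x,x,y)$ by $(x,y,y)$ in the bookkeeping; no new idea is required.

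For the functoriality statement I would check the two defining conditions of a morphism of Hom-alternative algebras. The condition $f\circ\alpha=\alpha'\circ f$ is part of the hypotheses, so only the multiplicativity $\mu'_{\alpha'}\circ(f\otimes f)=f\circ\mu_\alpha$ remains. Expanding, $\mu'_{\alpha'}(f(x),f(y))=\alpha'\big(\mu'(f(x),f(y))\big)=\alpha'\big(f(\mu(x,y))\big)$, where I used that $f$ is an algebra morphism; then the intertwining relation $\alpha'\circ f=f\circ\alpha$ turns this into $f\big(\alpha(\mu(x,y))\big)=f(\mu_\alpha(x,y))$, as required.

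There is no genuine obstacle here: the result is an instance of the Yau twisting principle, and the only thing to watch is the bookkeeping of where the extra factors of $\alpha$ land, together with the repeated and correct use of the endomorphism property of $\alpha$ (and of $f$). The crux — a mild one — is simply recognizing that the twisting inserts precisely the powers of $\alpha$ needed to convert the classical identity, precomposed with $\alpha^2$, into its Hom-analogue.
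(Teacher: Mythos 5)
Your proposal is correct and is essentially the paper's own argument: both proofs verify the left Hom-alternative identity by expanding $\mu_\alpha=\alpha\circ\mu$ and using the endomorphism property $\mu(\alpha(a),\alpha(b))=\alpha(\mu(a,b))$ to reduce to the classical left-alternative identity, and the functoriality check via $f\circ\mu_\alpha=f\circ\alpha\circ\mu=\alpha'\circ\mu'\circ(f\otimes f)=\mu'_{\alpha'}\circ(f\otimes f)$ is identical. The only cosmetic difference is that you push both copies of $\alpha$ all the way out and apply $\alpha^2$ to the untwisted identity on $(x,x,y)$, while the paper applies the untwisted identity to the shifted arguments $(\alpha(x),\alpha(x),\alpha(y))$ midway through the chain --- the same computation read in the opposite direction.
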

\begin{proof}
We show that $(V,\mu_\alpha,\alpha)$ satisfies the left
Hom-alternative identity (\ref{HomLeftAlternative}). Indeed
\begin{align*}
\mu_\alpha(\alpha(x)\otimes \mu_\alpha (x\otimes y))&=
\alpha(\mu(\alpha(x)\otimes \alpha(\mu (  x)\otimes  y)))\\
&=\alpha(\mu(\alpha(x)\otimes \mu ( \alpha (x)\otimes  \alpha (y)))\\
&=\alpha(\mu( \mu (\alpha(x)\otimes \alpha (x))\otimes  \alpha (y)))\\
&=\alpha(\mu( \alpha(\mu (x\otimes x))\otimes  \alpha (y)))\\
&= \mu_\alpha (\mu_\alpha (x\otimes x)\otimes \alpha (y)).
\end{align*}
The proof for right alternativity is obtained similarly.

the second assertion follows from
$$f\circ \mu_\alpha=f\circ \alpha \circ \mu=\alpha'\circ
f \circ \mu =\alpha'\circ \mu' \circ f  =\mu'_{\alpha'} \circ f.
$$
\end{proof}
The theorem (\ref{thmConstrHomAlt}) gives a procedure to construct Hom-alternative algebras
using ordinary alternative algebras and their algebras
endomorphisms.
\begin{remark}\label{HomAltInducedByAlt}
Let  $(V,\mu,\alpha)$ be a Hom-alternative algebra, one may ask whether this
Hom-alternative algebra is   induced by an
ordinary alternative algebra $(V,\widetilde{\mu})$, that is $\alpha$
is an algebra endomorphism with respect to $\widetilde{\mu}$ and
$\mu=\alpha\circ\widetilde{\mu}$. This question was addressed and discussed for
Hom-associative algebras in \cite{FregierGohr2,Gohr}.

First observation, if $\alpha$
is an algebra endomorphism with respect to $\widetilde{\mu}$
then $\alpha$ is also an algebra endomorphism
with respect to $\mu$. Indeed,
$$\mu(\alpha(x),\alpha(y))=\alpha\circ\widetilde{\mu}(\alpha(x),\alpha(y))=
\alpha\circ\alpha\circ\widetilde{\mu}(x,y)=\alpha\circ\mu(x,y).$$

Second observation, if $\alpha$ is bijective then $\alpha^{-1}$ is
also an algebra automorphism. Therefore one may use an untwist
operation on the Hom-alternative algebra in order to recover the
alternative algebra ($\widetilde{\mu}=\alpha^{-1}\circ\mu$).
\end{remark}

\subsection{ Examples of Hom-Alternative algebras}
We construct examples of Hom-alternative using theorem
(\ref{thmConstrHomAlt}).
We use to this end the classification of 4-dimensional alternative algebras
which are not associative (see \cite{EGG}) and the algebra of octonions (see \cite{Baez}).
 For each algebra, algebra endomorphisms are provided. Therefore, Hom-alternative
 algebras are attached according to theorem
(\ref{thmConstrHomAlt}).

\begin{example}
[Hom-alternative algebras of dimension 4]  According to \cite{EGG}, p 144,
there are exactly two alternative but not associative algebras of
dimension 4 over any field.
With respect to a basis $\{e_0, e_1, e_2, e_3\}$, one algebra is given by the
following multiplication (the unspecified products are zeros)
\begin{eqnarray*} && \mu_1(e_0,e_0)=e_0, \; \mu_1 (e_0,e_1)=e_1,
\;\mu_1 (e_2,e_0)=e_2,\\&&
\mu_1 (e_2,e_3)=e_1, \;\mu_1 (e_3,e_0)=e_3, \;\mu_1 (e_3,e_2)=- e_1.
\end{eqnarray*}
The other algebra is given by
\begin{eqnarray*}&& \mu_2 ( e_0,e_0)=e_0, \;\mu_2 ( e_0,e_2)=e_2,
 \;\mu_2 (e_0,e_3)=e_3,
 \\&& \mu_2 (e_1,e_0)=e_1, \;\mu_2 (e_2,e_3)=e_1, \;\mu_2 (e_3,e_2)=- e_1.
 \end{eqnarray*}
These two alternative algebras are anti-isomorphic, that is the first
one is isomorphic to the opposite of the second one. The algebra endomorphisms of $\mu_1$ and  $\mu_2$ are exactly the same.
 We provide two examples of algebra endomorphisms for these algebras.
\begin{enumerate}
 \item The algebra endomorphism  $\alpha_1$ with respect to the same basis is defined by
\begin{eqnarray*}
&& \alpha_1(e_0)= e_0+a_1\ e_1+a_2\ e_2+a_3\ e_3 ,\ \
\alpha_1(e_1)=0,\ \ \\&&
\alpha_1(e_2)=a_4 \ e_2+\frac{a_4 a_3}{a_2} \ e_3,\ \
\alpha_1(e_3)=a_5 \ e_2+\frac{a_5 a_3}{a_2} \ e_3,
\end{eqnarray*}
with $a_1,\cdots,a_5\in \K$ and $a_2\neq 0$.
\item The algebra endomorphism $\alpha_2$ with respect to the same basis is defined by
\begin{eqnarray*}
&& \alpha_2(e_0)= e_0+a_1\ e_1+a_2\ e_2+a_3\ e_3 ,\ \
\alpha_2(e_1)=a_4\ e_1,\ \ \\&&
\alpha_2(e_2)=- \frac{a_4 a_2}{a_5} \ e_2- \frac{a_4 a_3}{a_5} \ e_3,\ \
\alpha_2(e_3)=a_5 \ e_1+a_6 \ e_2+\frac{a_6 a_3-a_5}{a_2} \ e_3,
\end{eqnarray*}
with $a_1,\cdots,a_6\in \K$ and $a_2,a_5\neq 0$.
\end{enumerate}

According to theorem (\ref{thmConstrHomAlt}), the linear map $\alpha_1$ an the
following multiplications 

 \begin{align*} & \bullet  \mu^1_1(e_0,e_0)=e_0+a_1\ e_1+a_2\ e_2+a_3\ e_3, \;
\mu^1_1 (e_0,e_1)=0,
\;\mu^1_1 (e_2,e_0)=a_4 \ e_2+\frac{a_4 a_3}{a_2} \ e_3,\\& \ \ \
\mu^1_1 (e_2,e_3)=0, \;\mu^1_1 (e_3,e_0)=a_5 \ e_2+\frac{a_5 a_3}{a_2} \ e_3, \;\mu_1 (e_3,e_2)=0.
\end{align*}

\begin{align*}& \bullet \mu^1_2 ( e_0,e_0)=e_0+a_1\ e_1+a_2\ e_2+a_3\ e_3, \;
\mu^1_2 ( e_0,e_2)=a_4 \ e_2+\frac{a_4 a_3}{a_2} \ e_3,
 \;\\& \ \ \ \mu^1_2 (e_0,e_3)=a_5 \ e_2+\frac{a_5 a_3}{a_2} \ e_3,
 \; \mu^1_2 (e_1,e_0)=0, \;\mu^1_2 (e_2,e_3)=0, \;\mu^1_2 (e_3,e_2)=0.
 \end{align*}
 determine  4-dimensional Hom-alternative algebras.
 
The linear map $\alpha_2$ leads to the following multiplications

\begin{align*}& \bullet \mu^2_1(e_0,e_0)=e_0+a_1\ e_1+a_2\ e_2+a_3\ e_3, \;
 \mu^2_1 (e_0,e_1)=a_4\ e_1,
\;\mu^2_1 (e_2,e_0)=- \frac{a_4 a_2}{a_5} \ e_2- \frac{a_4 a_3}{a_5} \ e_3,
\\& \ \ \
\mu^2_1 (e_2,e_3)=a_4\ e_1, \;\mu^2_1 (e_3,e_0)=e_3,
\;\mu^2_1 (e_3,e_2)=- a_4\ e_1.
\end{align*}

\begin{align*}& \bullet \mu^2_2 ( e_0,e_0)=e_0+a_1\ e_1+a_2\ e_2+a_3\ e_3,
 \;\mu^2_2 ( e_0,e_2)=- \frac{a_4 a_2}{a_5} \ e_2- \frac{a_4 a_3}{a_5} \ e_3,
 \;\\& \ \ \ \mu^2_2 (e_0,e_3)=a_5 \ e_1+a_6 \ e_2+\frac{a_6 a_3-a_5}{a_2} \ e_3,
 \;\\& \ \ \ \mu^2_2 (e_1,e_0)=a_4\ e_1, \;\mu_2 (e_2,e_3)=a_4\ e_1,
  \;\mu^2_2 (e_3,e_2)=- a_4\ e_1.
 \end{align*}
\end{example}

\begin{example}[Octonions]
{\rm Octonions are typical example of alternative algebra. They were discovered in 1843 by John T. Graves who
called them Octaves and independently by Arthur Cayley in 1845. See \cite{Baez} for
the role of the octonions in algebra, geometry and topology and see
also \cite{Albuquerque} where octonions are viewed as a quasialgebra. The
octonions algebra which is also called Cayley Octaves or Cayley
algebra is an  $8$-dimensional defined with respect to a basis
$\{u,e_1,e_2,e_3,e_4,e_5,e_6,e_7\}$, where $u$ is the identity for the
multiplication, by the following multiplication table.  The table
describes multiplying the $i$th row elements  by the $j$th column
elements.

\[
\begin{array}{|c|c|c|c|c|c|c|c|c|}
  \hline
   \ & u& e_1 & e_2 & e_3 & e_4 & e_5 & e_6 & e_7 \\ \hline
   u& u& e_1 & e_2 & e_3 & e_4 & e_5 & e_6 & e_7 \\ \hline
   e_1 &e_1 & -u & e_4 & e_7 & -e_2 & e_6 & -e_5 &- e_3 \\ \hline
   e_2 &e_2 & -e_4 & -u & e_5 & e_1 & -e_3 & e_7 & -e_6 \\ \hline
   e_3 &e_3 & -e_7 & -e_5 & -u & e_6 & e_2 & -e_4 & e_1 \\ \hline
   e_4 &e_4 & e_2 & -e_1 & -e_6 & -u & e_7 & e_3 & -e_5 \\ \hline
   e_5 &e_5 & -e_6 & e_3 & -e_2 & -e_7 & -u & e_1 & e_4 \\ \hline
   e_6 &e_6 & e_5 & -e_7 & e_4 & -e_3 & -e_1 & -u & e_2 \\ \hline
   e_7 &e_7 & e_3 & e_6 & -e_1 & e_5 & -e_4 & -e_2 & -u \\
    \hline
\end{array}
\]
\

The diagonal algebra endomorphism of octonions are give by maps $\alpha$
defined with respect to the basis $\{u,e_1,e_2,e_3,e_4,e_5,e_6,e_7\}$ by
\begin{eqnarray*}
&& \alpha(u)=u,\ \ \alpha(e_1)=a\ e_1,\ \ \alpha(e_2)=b\ e_2,\ \
\alpha(e_3)=c \ e_3
,\\&&
 \alpha(e_4)=a b\ e_4,\ \ \alpha(e_5)=b c\  e_5
,\ \ \alpha(e_6)=a b c\  e_6,\ \ \alpha(e_7)=a c\ e_7,
\end{eqnarray*}
where $a,b,c$ are any parameter in $\K$.
The associated Hom-alternative algebra to the octonions algebra according to
theorem (\ref{thmConstrHomAlt}) is described by the map $\alpha$ and the 
multiplication   defined by the following table. The table
describes multiplying the $i$th row elements  by the $j$th column
elements.

\[
\begin{array}{|c|c|c|c|c|c|c|c|c|}
  \hline
   \ & u& e_1 & e_2 & e_3 & e_4 & e_5 & e_6 & e_7 \\ \hline
   u& u& a e_1 & b e_2 &c e_3 & ab e_4 & bc e_5 & a bc e_6 & a c e_7 \\ \hline
   e_1 &a e_1 & -u & ab e_4 & a c e_7 & -b e_2 & a bc e_6 & -bc e_5 &- c e_3 \\ \hline
   e_2 &b e_2 & -ab e_4 & -u & bc e_5 & a e_1 & -c e_3 & a c e_7 & -a bc e_6 \\ \hline
   e_3 &c e_3 & -a c e_7 & -bc e_5 & -u & a bc e_6 & b e_2 & -ab e_4 & a e_1 \\ \hline
   e_4 &ab e_4 & b e_2 & -a e_1 & -a bc e_6 & -u & a c e_7 & c e_3 & -bc e_5 \\ \hline
   e_5 &bc e_5 & -a bc e_6 & c e_3 & -b e_2 & -a c e_7 & -u & a e_1 & ab e_4 \\ \hline
   e_6 &a bc e_6 & bc e_5 & -a c e_7 & ab e_4 & -c e_3 & -a e_1 & -u & b e_2 \\ \hline
   e_7 &a c e_7 & c e_3 & a bc e_6 & -a e_1 & bc e_5 & -ab e_4 & -b e_2 & -u \\
    \hline
\end{array}
\]
Notice that the new algebra is no longer unital, neither  an
alternative algebra since
$$\mu(u,\mu(u,e_1))-\mu(\mu(u,u),e_1)=(a^2-a)e_1,
$$
which is different from $0$ when $a\neq 0,1$.
}
\end{example}


\section{Hom-Jordan algebras}\label{SectionHomJordan}
In this section, we introduce a generalization of Jordan algebra by twisting
the usual Jordan identity
\begin{equation}\label{JordanIdentity}
(x\cdot y)\cdot x^2=x\cdot( y\cdot x^2).
\end{equation}
We show that this
generalization fits with Hom-associative algebras. Also, we provide a procedure to
construct examples starting from an ordinary Jordan algebras.
\begin{definition}
  A \textbf{Hom-Jordan algebra} is a triple $(V, \mu, \alpha)$
    consisting of a linear space $V$, a bilinear map $\mu: V\times V
    \rightarrow V$ which is commutative and a  homomorphism $\alpha: V \rightarrow V$
satisfying
\begin{equation}\label{HomJordanIdentity}
\mu(\alpha^2(x),\mu (y,\mu(x,x)))=\mu (\mu (\alpha (x),y),\alpha
(\mu(x,x)))
\end{equation}
where $\alpha^2=\alpha \circ \alpha$.
\end{definition}

Let $(V, \mu, \alpha)$ and $(V', \mu', \alpha')$ be two Hom-Jordan algebras. A linear map $\phi\ :V\rightarrow V^{\prime }$ is a morphism
of Hom-Jordan algebras if%
$$
\mu ^{\prime }\circ (\phi\otimes \phi)=\phi\circ \mu \quad \text{
and } \qquad \phi\circ \alpha=\alpha^{\prime }\circ \phi.
$$
\begin{remark}
Since the multiplication is commutative, one may write the identity (\ref{HomJordanIdentity})
as
\begin{equation}\label{HomJordanIdentity2}
\mu(\mu (y,\mu(x,x)),\alpha^2(x))=\mu (\mu (y,\alpha (x)),\alpha
(\mu(x,x))).
\end{equation}
\end{remark}
When the twisting map $\alpha$ is the identity map, we recover the classical
notion of Jordan algebra. 

The identity (\ref{HomJordanIdentity}) is motivated by the following
functor which associates to a Hom-associative algebra a Hom-Jordan
algebra by polarization.

\begin{theorem}
To any Hom-associative algebra $(V,m,\alpha)$ defined by the
multiplication $m$ and a homomorphism $\alpha$ over a $\K$-linear
space $V$, one may associate a  Hom-Jordan algebra
$(V,\mu,\alpha)$, where the multiplication
$\mu$ is  defined for all $x,y \in V$ by
$$
\mu( x,y )=\frac{1}{2}(m (x,y)+m (y,x )).
$$
\end{theorem}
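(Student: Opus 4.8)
The plan is to verify the Hom-Jordan identity \eqref{HomJordanIdentity} by brute expansion in terms of the underlying Hom-associative product $m$, then collapse everything using \eqref{Hom-ass}. Commutativity of $\mu$ is immediate from $\mu(x,y)=\tfrac12(m(x,y)+m(y,x))$, so only the identity needs work. First I would record the normalization $\mu(x,x)=\tfrac12(m(x,x)+m(x,x))=m(x,x)$ and write $x^2:=m(x,x)$, so that $\mu(x,x)$ involves no stray factor. Expanding $\mu(\alpha^2(x),\mu(y,x^2))$ and $\mu(\mu(\alpha(x),y),\alpha(x^2))$ through the defining formula turns each side into a sum of four monomials, each of the form $m(\,\cdot\,,m(\,\cdot\,,\cdot))$ or $m(m(\,\cdot\,,\cdot),\,\cdot\,)$ and each carrying a coefficient $\tfrac14$. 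The whole problem then reduces to matching these eight monomials term by term; note that $\alpha$ is never assumed multiplicative, so the factor $\alpha(x^2)$ appearing on the right is left untouched throughout.

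The one structural input I would isolate is the following consequence of Hom-associativity evaluated on the diagonal $x=y=z$: identity \eqref{Hom-ass} gives $m(\alpha(x),m(x,x))=m(m(x,x),\alpha(x))$, that is, $m(\alpha(x),x^2)=m(x^2,\alpha(x))$. In other words $\alpha(x)$ and $x^2$ commute under $m$. This is exactly the twisted analogue of the elementary fact ``$x$ commutes with $x^2$'' that makes the classical symmetrization of an associative algebra into a Jordan algebra, and it is the only genuine ingredient beyond \eqref{Hom-ass} itself.

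The matching then proceeds mechanically, reading \eqref{Hom-ass} as $m(\alpha(a),m(b,c))=m(m(a,b),\alpha(c))$ in one direction or the other. Taking $a=\alpha(x),b=y,c=x^2$ sends $m(\alpha^2(x),m(y,x^2))$ to $m(m(\alpha(x),y),\alpha(x^2))$, matching a right-hand term; reading it backwards with $a=x^2,b=y,c=\alpha(x)$ sends $m(m(x^2,y),\alpha^2(x))$ to $m(\alpha(x^2),m(y,\alpha(x)))$, matching another. For the remaining two monomials on each side I would apply \eqref{Hom-ass} to bring the left-hand $m(\alpha^2(x),m(x^2,y))$ into the form $m(m(\alpha(x),x^2),\alpha(y))$ and the right-hand $m(\alpha(x^2),m(\alpha(x),y))$ into $m(m(x^2,\alpha(x)),\alpha(y))$, and likewise $m(m(y,x^2),\alpha^2(x))$ into $m(\alpha(y),m(x^2,\alpha(x)))$ against $m(m(y,\alpha(x)),\alpha(x^2))=m(\alpha(y),m(\alpha(x),x^2))$; at this point the diagonal commutation $m(\alpha(x),x^2)=m(x^2,\alpha(x))$ closes the last two gaps. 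Summing the four matched pairs, each weighted by $\tfrac14$, yields \eqref{HomJordanIdentity}. The only real obstacle is organizational rather than computational: one must choose, for each of the eight monomials, the correct direction in which to apply \eqref{Hom-ass} so that after the single commutation step the two sides coincide exactly; once the diagonal commutativity is in hand there is no substantive calculation left.
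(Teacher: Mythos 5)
Your proposal is correct and follows essentially the same route as the paper's own proof: the paper likewise expands the difference of the two sides of \eqref{HomJordanIdentity} into eight $m$-monomials, cancels two pairs directly via \eqref{Hom-ass}, and resolves the remaining four by applying \eqref{Hom-ass} in both directions together with the diagonal commutation $m(\alpha(x),m(x,x))=m(m(x,x),\alpha(x))$. Your only cosmetic difference is that you isolate this diagonal identity as an explicit lemma, whereas the paper uses it silently as the middle step of its chain of equalities.
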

\begin{proof}
The commutativity of $\mu$ is obvious. We   compute the difference
$$D=\mu(\alpha^2(x),\mu
(y,\mu(x,x)))-\mu (\mu
(\alpha(x),y),\alpha (\mu(x,x)))
$$
A straightforward computation gives
\begin{eqnarray*}
D&=&
m(\alpha^2(x),m(y,m (x,x)))
+m(m(y,m(x,x)),\alpha^2(x))+
m(\alpha^2(x),m(m(x,x),y))
\\ \ &&
+m(m(m(x,x),y),\alpha^2(x))
-m(m(\alpha(x),y),\alpha(m(x,x)))
-m(\alpha(m(x,x)),m(\alpha(x),y))
\\ \ &&
-m(m(y,\alpha(x)),\alpha(m(x,x)))
-m(\alpha(m(x,x)),m(y,\alpha(x))).
\end{eqnarray*}
We have by Hom-associativity
\begin{eqnarray*}
m(\alpha^2(x),m(y,m (x,x)))-m(m(\alpha(x),y),\alpha(m(x,x)))=0\\
m(m(m(x,x),y),\alpha^2(x))-m(\alpha(m(x,x)),m(y,\alpha(x)))=0.
\end{eqnarray*}
Therefore
\begin{eqnarray*}
D&=&
m(m(y,m(x,x)),\alpha^2(x))+
m(\alpha^2(x),m(m(x,x),y))
\\ \ &&
-m(\alpha(m(x,x)),m(\alpha(x),y))
-m(m(y,\alpha(x)),\alpha(m(x,x))).
\end{eqnarray*}
One may show that for any Hom-associative algebra we have
\begin{eqnarray*}
m(\alpha(m(x,x)),m(\alpha(x),y))
&= m(m(m(x,x),\alpha(x)),\alpha(y))
\\&=m(m(\alpha(x),m(x,x)),\alpha(y))
\\&=m(\alpha^2(x),m(m(x,x),y)),
\end{eqnarray*}
and similarly
\begin{equation*}
m(m(y,\alpha(x)),\alpha(m(x,x)))=m(m(y,m(x,x)),\alpha^2(x)).
\end{equation*}
Thus
\begin{eqnarray*}
D&=&
m(m(y,m(x,x)),\alpha^2(x))+
m(\alpha^2(x),m(m(x,x),y))
\\ \ &&
-m(\alpha^2(x),m(m(x,x),y))
-m(m(y,m(x,x)),\alpha^2(x))
\\ \ &=&0.
\end{eqnarray*}
\end{proof}
\begin{remark}
The definition of Hom-Jordan algebra seems to be non natural one
expects that the identity should be of the form
\begin{equation}\label{HomJordanIdentity2}
\mu(\alpha(x),\mu (y,\mu(x,x))))=\mu (\mu (x,y),\alpha (\mu(x,x)))
\end{equation}
or
\begin{equation}\label{HomJordanIdentity3}
\mu(\alpha(x),\mu (y,\mu(x,x))))=\mu (\mu (x,y),\mu(x,\alpha (x))).
\end{equation}
It turns out that these identities do not fit with the previous
proposition.

 Notice also that in general a Hom-alternative algebra doesn't lead to a
 Hom-Jordan algebra.
\end{remark}

The following theorem gives a procedure to construct Hom-Jordan algebras
using ordinary Jordan algebras and their algebra
endomorphisms.
\begin{theorem}\label{thmConstrHomJordan}
Let $(V,\mu)$ be a Jordan algebra and $\alpha : V\rightarrow V$ be an
 algebra endomorphism. Then $(V,\mu_\alpha,\alpha)$,
where $\mu_\alpha=\alpha\circ\mu$,  is a  Hom-Jordan
algebra.

Moreover, suppose that  $(V',\mu')$ is another
Jordan algebra  and $\alpha ' : V'\rightarrow V'$ is an algebra
endomorphism. If $f:V\rightarrow V'$ is an algebras morphism that
satisfies $f\circ\alpha=\alpha'\circ f$ then
$$f:(V,\mu_\alpha,\alpha)\longrightarrow (V',\mu'_{\alpha '},\alpha ')
$$
is a morphism of  Hom-Jordan algebras.
\end{theorem}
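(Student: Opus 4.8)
The plan is to verify that $(V,\mu_\alpha,\alpha)$ satisfies the Hom-Jordan identity (\ref{HomJordanIdentity}) by reducing it to the ordinary Jordan identity (\ref{JordanIdentity}) for $\mu$, in exactly the spirit in which the construction Theorem \ref{thmConstrHomAlt} reduced the Hom-alternative identities to the alternative ones. First I would record the easy preliminaries: since $\mu$ is commutative, so is $\mu_\alpha=\alpha\circ\mu$, because $\mu_\alpha(x,y)=\alpha(\mu(x,y))=\alpha(\mu(y,x))=\mu_\alpha(y,x)$, so the commutativity requirement in the definition of Hom-Jordan algebra holds. I would write $a\cdot b$ for $\mu(a,b)$ and use throughout the single structural fact that $\alpha$ is an algebra endomorphism, i.e. $\alpha(a\cdot b)=\alpha(a)\cdot\alpha(b)$.

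Next I would expand each side of (\ref{HomJordanIdentity}) for the multiplication $\mu_\alpha$, repeatedly substituting $\mu_\alpha(a,b)=\alpha(a\cdot b)$ and then pulling every occurrence of $\alpha$ to the outermost position via the endomorphism property. For the left-hand side one computes $\mu_\alpha(x,x)=\alpha(x\cdot x)$, then $\mu_\alpha(y,\mu_\alpha(x,x))=\alpha(y)\cdot\alpha^2(x\cdot x)=\alpha(y)\cdot(\alpha^2(x)\cdot\alpha^2(x))$, and finally
$$\mu_\alpha(\alpha^2(x),\mu_\alpha(y,\mu_\alpha(x,x)))=\alpha\bigl(\alpha^2(x)\cdot(\alpha(y)\cdot(\alpha^2(x)\cdot\alpha^2(x)))\bigr).$$
An analogous expansion of the right-hand side, using $\alpha(\mu_\alpha(x,x))=\alpha^2(x)\cdot\alpha^2(x)$ and $\mu_\alpha(\alpha(x),y)=\alpha^2(x)\cdot\alpha(y)$, gives
$$\mu_\alpha(\mu_\alpha(\alpha(x),y),\alpha(\mu_\alpha(x,x)))=\alpha\bigl((\alpha^2(x)\cdot\alpha(y))\cdot(\alpha^2(x)\cdot\alpha^2(x))\bigr).$$

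Setting $A=\alpha^2(x)$ and $B=\alpha(y)$, the two sides become $\alpha\bigl(A\cdot(B\cdot A^2)\bigr)$ and $\alpha\bigl((A\cdot B)\cdot A^2\bigr)$, where $A^2=A\cdot A$. These agree precisely by the ordinary Jordan identity (\ref{JordanIdentity}) applied with $x$ replaced by $A=\alpha^2(x)$ and $y$ replaced by $B=\alpha(y)$, and applying $\alpha$ to both sides of that identity preserves the equality; this establishes (\ref{HomJordanIdentity}) and proves the first assertion. The point I expect to be the main obstacle is purely bookkeeping: one must track the powers of $\alpha$ carefully and notice that the correct substitution into the Jordan identity uses $\alpha^2(x)$ rather than $\alpha(x)$. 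This is exactly why the definition of Hom-Jordan algebra carries $\alpha^2(x)$, and mismatching these powers is the only way the argument can fail.

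Finally, for the functoriality statement the morphism condition is checked just as in Theorem \ref{thmConstrHomAlt}: the hypothesis $f\circ\alpha=\alpha'\circ f$ is already one of the two required equalities, and for multiplicativity one computes
$$f\circ\mu_\alpha=f\circ\alpha\circ\mu=\alpha'\circ f\circ\mu=\alpha'\circ\mu'\circ(f\otimes f)=\mu'_{\alpha'}\circ(f\otimes f),$$
using that $f$ is an algebra morphism for the Jordan products in the middle step. Hence $f$ is a morphism of Hom-Jordan algebras.
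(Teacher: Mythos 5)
Your proof is correct and follows essentially the same route as the paper: a direct expansion of the Hom-Jordan identity for $\mu_\alpha$ using the endomorphism property of $\alpha$, reduced to the ordinary Jordan identity for $\mu$ --- the paper factors $\alpha^2$ out front and applies the Jordan identity at $(\alpha(x),y)$, while you keep a single outer $\alpha$ and apply it at $(\alpha^2(x),\alpha(y))$, a purely cosmetic difference --- followed by the identical one-line functoriality computation $f\circ \mu_\alpha=\mu'_{\alpha'}\circ (f\otimes f)$. Your explicit check that $\mu_\alpha$ inherits commutativity is a detail the paper leaves implicit, but otherwise the arguments coincide.
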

\begin{proof}
We show that $(V,\mu_\alpha,\alpha)$ satisfies the
Hom-Jordan identity (\ref{HomJordanIdentity}) while $(V,\mu)$ satisfies
the Jordan identity (\ref{JordanIdentity}). Indeed
\begin{align*}
& \mu_\alpha(\alpha^2(x),\mu_\alpha
(y,\mu_\alpha(x,x)))-\mu_\alpha (\mu_\alpha
(\alpha(x),y),\alpha (\mu_\alpha(x,x)))
\\ &  = \alpha\circ \mu(\alpha^2(x),\alpha\circ \mu
(y,\alpha\circ \mu(x,x)))-\alpha\circ \mu (\alpha\circ \mu
(\alpha(x),y),\alpha^2\circ \mu(x,x))
\\ &  = \alpha^2( \mu(\alpha(x), \mu
(y,\alpha\circ \mu(x,x)))- \mu ( \mu
(\alpha(x),y),\alpha\circ \mu(x,x)))
\\ &  = \alpha^2( \mu(\alpha(x), \mu
(y,\mu(\alpha(x),\alpha(x))))- \mu ( \mu
(\alpha(x),y), \mu(\alpha(x),\alpha(x))))
\\ &  =0.
\end{align*}

the second assertion follows from
$$f\circ \mu_\alpha=f\circ \alpha \circ \mu=\alpha'\circ
f \circ \mu =\alpha'\circ \mu' \circ f  =\mu'_{\alpha'} \circ f.
$$
\end{proof}
\begin{remark}
We may give here similar observations as in the remark
(\ref{HomAltInducedByAlt})
concerning Hom-Jordan algebra  induced by an
ordinary Jordan algebra.
\end{remark}
We provide in the sequel example of Hom-Jordan algebras.

\begin{example}
We consider Hom-Jordan algebras associated to  Hom-associative
 algebras described in example (\ref{example1ass}).
Let $\{e_1,e_2,e_3\}$  be a basis of a $3$-dimensional linear space
$V$ over $\K$. The following multiplication $\mu$ and linear map
$\alpha$ on $V$ define  Hom-Jordan algebras over $\K^3${\rm :}
$$
\begin{array}{ll}
\begin{array}{lll}
 \widetilde{\mu} ( e_1,e_1)&=& a\ e_1, \ \\
\widetilde{\mu} ( e_1,e_2)&=&\widetilde{\mu} ( e_2,e_1)=a\ e_2,\\
\widetilde{\mu} ( e_1,e_3)&=&\widetilde{\mu} ( e_3,x_1)=b\ e_3,\\
 \end{array}
 & \quad
 \begin{array}{lll}
\widetilde{\mu} ( e_2,e_2)&=& a\ e_2, \ \\
\widetilde{\mu} ( e_2, e_3)&=& \frac{1}{2}b\ e_3, \ \\
\widetilde{\mu} ( e_3,e_2)&=& \widetilde{\mu} ( e_3,e_3)=0,
  \end{array}
\end{array}
$$

$$  \alpha (e_1)= a\ e_1, \quad
 \alpha (e_2) =a\ e_2 , \quad
   \alpha (e_3)=b\ e_3
$$
where $a,b$ are parameters in $\K$.

It turns out that the multiplication of this Hom-Jordan algebra defines
 a Jordan algebra.

\end{example}

\begin{remark}
We may define the noncommutative Jordan algebras as triples $(V,\mu,\alpha)$ satisfying the identity (\ref{HomJordanIdentity}) and the flexibility condition, which is a generalization of the commutativity. Eventually, we may consider the Hom-flexibilty defined by the identity
$\mu(\alpha(x), \mu (y, x))= \mu (\mu (x, y), \alpha (x)).$
 
\end{remark}

\end{document}